\newcommand{\Z}{\mathbb{Z}}
\DeclareMathOperator\im{im}
\newcommand{\col}[2]{#1_{#2}}
\newcommand{\row}[2]{#1^{#2}}
\newcommand{\entry}[3]{#1^{#2}_{#3}}
\newcommand{\lowest}{\mathrm{pivot}}
\newcommand{\bm}{D}
\newcommand{\noglobal}{g}
\newcommand{\ignore}[1]{}
\newtheorem{theorem}{Theorem}
\newtheorem{lemma}[theorem]{Lemma}
\newtheorem{corollary}[theorem]{Corollary}
\begin{document}

\title{Clear and Compress: Computing Persistent Homology in Chunks}
\author{
Ulrich Bauer\footnote{Institute of Science and Technology Austria (IST Austria), Klosterneuburg, Austria} \and Michael Kerber\footnote{IST Austria; Stanford University, CA, USA; Max-Planck-Center for Visual Computing and Communication, Saarbr\"ucken, Germany} \and Jan Reininghaus\footnote{Institute of Science and Technology Austria (IST Austria), Klosterneuburg, Austria}
}
\date{}

\maketitle

\abstract{
We present a parallelizable algorithm for computing the persistent homology
of a filtered chain complex. 
Our approach differs from the commonly used reduction algorithm by first
computing persistence pairs within local chunks,
then simplifying the unpaired columns, and finally applying standard
reduction on the simplified matrix.
The approach generalizes a technique by G\"unther et al., which uses discrete Morse Theory to compute persistence;
we derive the same worst-case complexity bound in a more general context.
The algorithm employs several practical optimization techniques 
which are of independent interest.
Our sequential implementation of the algorithm is competitive with state-of-the-art methods,
and we improve the performance through parallelized computation.
}

\section{Introduction}
Persistent homology has developed from a theoretical idea to an entire research area within the
field of computational topology. One of its core features is its multi-scale approach to analyzing and quantifying
topological features in data. Recent examples of application areas are
shape classification~\cite{ccgmo-gromov}, topological denoising~\cite{blw-optimal}, or developmental biology~\cite{medusa-top}.

A second major feature of persistent homology is the existence of a simple yet efficient computation method:
The standard reduction algorithm as described in~\cite{elz-topological,zc-computing} computes the persistence
pairs by a simple sequence of column operations; Algorithm~\ref{alg:lr_persistence} gives a complete description 
in just $10$ lines of pseudo-code. The worst-case complexity is cubic in the input size of the complex,
but the practical behavior has been observed to be closer to linear on average. Various variants have been proposed in order to improve the theoretical bounds~\cite{mms-zigzag,ck-output} or the practical behavior~\cite{dmv-dualities}.

Our first contribution consists of two simple optimization techniques of the standard reduction algorithm, which we call
\emph{clearing} 
and \emph{compression}.
Both approaches exploit the special structure of a filtered chain complex in order to significantly reduce the number of operations on real-world instances. However, the two methods cannot be easily combined because they require the columns of the boundary matrix to be processed in different orders. 
   
Our second contribution is a novel algorithm
that incorporates both of the above optimization techniques,
and is also suitable for parallelization.
It proceeds in three steps: In the first step, the columns of the matrix are partitioned into consecutive \emph{chunks}. 
Each chunk is reduced independently, applying the clearing optimization mentioned above.
In this step, the algorithm finds at least
all persistence pairs with (index) persistence less than the size of the smallest chunk; 
let $g$ be the number
of columns not paired within this first step.
In the second step, the $g$ unpaired columns
are compressed using the method mentioned above. After compression, each column has at most $g$ non-zero entries
and the unpaired columns form a nested $(g\times g)$-matrix.
In the third and final step, this nested matrix is reduced, again applying the clearing optimization.

The chunk algorithm is closely related to two other methods for computing persistence.
First of all, the \emph{spectral sequence algorithm}~\cite[\textsection VII.4]{eh-computational} decomposes the matrix into
blocks and proceeds in several phases, computing in phase $r$ the persistence pairs lying $r-1$ blocks apart. The first step of the 
chunk algorithm is equivalent to applying the first two phases of the spectral sequence approach.
Furthermore, the three step chunk algorithm is inspired 
by the approach of G\"unther et al.~\cite{Guenther_PLD2},
which combines persistence computation and discrete Morse theory for 3D image data. 
The first step of that algorithm consists in constructing a discrete gradient field consistent with the input function; such a gradient field can be interpreted as a set of persistence pairs that are incident in the complex and have persistence $0$. We replace this method by local persistence computations,
 allowing us to find pairs with are not incident in the complex. 

We analyze the chunk algorithm in terms of time complexity. Let $n$ be the number of generators (simplices, cells)
of the chain complex, $m$ the number of chunks, $\ell$ the maximal size of a chunk, and $g$ as above.
We obtain a worst-case bound of
$$O(m\ell^3+g\ell n + g^3),$$
where the three terms reflect the worst-case running times of the three steps.
For the filtration of a cubical complex induced by a $d$-dimensional grayscale image (with $d$ some fixed constant),
this bound simplifies to
$$O(gn+g^3),$$
if the chunks are given by the cells appearing simultaneously in the filtration.
This bound improves on the previous general bound of $O(g^2n\log n)$ from~\cite{ck-persistent}.
Moreover, it matches the bound in~\cite{Guenther_PLD2}, but applies to arbitrary dimensions.
Of course, the bound is still cubic if expressed only in terms of $n$ because $g\in O(n)$ in the worst case.

We implemented a sequential and a parallelized version of the chunk algorithm;
both are publicly available in our new PHAT library (\url{http://phat.googlecode.com/}).
The sequential code already outperforms the standard reduction algorithm and is competitive
to other variants with a good practical behavior.
The parallelized version using 12 cores yields a speed-up factor between 3 and 11 (depending on the example) in our tests, making the implementation the fastest among the considered choices.
This is the first result where the usefulness of parallelization is shown for the problem
of persistence computation through practical experiments.

\section{Background}
\label{sec:background}

This section summarizes the theoretical foundations of persistent homology
as needed in this work. 
We limit our scope to simplicial homology over $\Z_2$ just for the sake of simplicity in the description; our methods generalize to chain complexes 
over arbitrary fields.

\paragraph{Homology}
Homology is an algebraic tool for analyzing the connectivity of topological spaces.
Let $K$ be a simplicial complex of dimension $d$. 
In any dimension $p$, we call a \emph{$p$-chain} a formal sum
of the $p$-simplices of $K$ with $\Z_2$ coefficients. The $p$-chains
form a group called the \emph{$p$th chain group} $C_p$.
The \emph{boundary} of a $p$-simplex $\sigma$ is the $(p-1)$-chain formed
by the sum of all faces of $\sigma$ of codimension $1$. This operation extends linearly
to a \emph{boundary operator} $\delta:C_p\rightarrow C_{p-1}$.
A $p$-chain $\gamma$ is a \emph{$p$-cycle} if $\delta(\gamma)=0$.
The $p$-cycles form a subgroup of the $p$-chains, which we call
the \emph{$p$th cycle group} $Z_p$. A $p$-chain $\gamma$ is called a 
\emph{$p$-boundary} if $\gamma=\delta(\xi)$ for some $(p+1)$-chain $\xi$.
Again, the $p$-boundaries form a group $B_p$, and since $\delta(\delta(\xi))=0$
for any chain $\xi$, $p$-boundaries are $p$-cycles, 
and so $B_p$ is a subgroup of $Z_p$.
The \emph{$p$th homology group} $H_p$ is defined as the quotient group
$Z_p/B_p$. The rank of $H_p$ is denoted by $\beta_p$ and is called
the \emph{$p$th Betti number}.
In our case of $\Z_2$ coefficients, the homology group is a vector space
isomorphic to $\Z_2^{\beta_p}$, hence it is completely determined
by the Betti number.
Roughly speaking, the Betti numbers in dimension $0$, $1$, and $2$ yield the number
of connected components, tunnels, and voids of $K$, respectively.

\paragraph{Persistence}
Let $\{\sigma_1,\ldots,\sigma_n\}$ denote the simplices of $K$.
We assume that for each $i\leq n$, 
$K_i:=\{\sigma_1,\ldots,\sigma_i\}$ is a simplicial complex again.
The sequence of inclusions 
$\emptyset=K_0\subset\ldots\subset K_i\ldots\subset K_n=K$ is called
a \emph{simplexwise filtration} of $K$.
For every dimension $p$ and every $K_i$, we have a homology group $H_p(K_i)$;
we usually consider all dimensions at once and write $H(K_i)$
for the direct sum of the homology groups of $K_i$ in all dimensions.
The inclusion $K_{i}\hookrightarrow K_{i+1}$ induces a homomorphism
$g_{i}^{i+1}: H(K_i)\rightarrow H(K_{i+1})$ on the homology groups. 
These homomorphisms compose and we can define $g_{i}^{j}:H(K_i)\rightarrow H(K_j)$
for any $i\leq j$. We say that a class $\alpha\in H(K_\ell)$ is
\emph{born at (index) $i$} if $\alpha\in\im g_i^\ell$ 
but $\alpha\notin\im g_{i-1}^\ell$. 
A class $\alpha$ born at index $i$ \emph{dies entering (index)~$j$} if
$g_i^j(\alpha)\in\im g_{i-1}^j$ but 
$g_i^{j-1}(\alpha)\notin\im g_{i-1}^{j-1}$.
In this case, the index pair $(i,j)$ is called a \emph{persistence pair}, and the difference $j-i$ is the \emph{(index) persistence}
of the pair. 
The transition from $K_{i-1}$ to $K_i$ either causes the birth or the death
of an homology class. We call the added simplex $\sigma_i$ \emph{positive}
if it causes a birth and \emph{negative} if it causes a death. 
Note that homology classes of the full complex $K$ do not die during
the filtration. We call a simplex $\sigma_i$ that gives birth to 
such a class \emph{essential}. All other simplices are called 
\emph{inessential}.

\paragraph{Boundary matrix}
For a matrix $M\in\Z_2^{n\times n}$, we let $\col{M}{j}$
denote its $j$-th column, $\row{M}{i}$
its $i$-th row, and $\entry{M}{i}{j}\in\Z_2$
its entry in row $i$ and column $j$.
For a non-zero column $0\neq \col{M}{j}=(m_1,\ldots,m_n)\in\Z^n_2$, 
we set $\lowest(\col{M}{j}):=\max\{i=1,\ldots,n\mid m_i=1\}$
and call it the \emph{pivot index} of that column.

The \emph{boundary matrix} $\bm\in(\Z_2)^{n\times n}$ 
of a simplexwise filtration 
$(K_i)_i$ is a $n\times n$ matrix with
$\entry{\bm}{i}{j}=1$ 
if and only if $\sigma_i$ is a face of $\sigma_j$ of codimension~$1$.
In other words, the $j$th column of $\bm$ encodes the boundary
of $\sigma_j$. 
$\bm$ is an upper-triangular matrix because any face of $\sigma_j$
must precede $\sigma_j$ in the filtration.
Since the $j$th row and column of $\bm$ corresponds to the $j$th
simplex $\sigma_j$ of the filtration, we can talk about
\emph{positive columns}, \emph{negative columns}, and \emph{essential columns}
in a natural way, and similarly for rows.

\paragraph{The reduction algorithm}
A column operation of the form
$\col{M}{j}\gets \col{M}{j}+\col{M}{k}$ is called \emph{left-to-right} if $k<j$.
We call a matrix $M'$ \emph{derived from $M$}
if $M$ can be transformed
into $M'$ by left-to-right operations.
Note that in a derivation $M'$ of $M$, the $j$th column can be expressed
as a linear combination of the columns $1,\ldots,j$ of $M$, and this
linear combination includes $\col{M}{j}$.
We call a matrix $R$ \emph{reduced} if no two non-zero columns have the same 
pivot index. If $R$ is derived from $M$, we call it a \emph{reduction of $M$}.
In this case, we define
\begin{eqnarray*}
P_R&:=&\{(i,j)\mid \col{R}{j}\neq 0 \wedge i=\lowest(\col{R}{j})\}\\
E_R&:=&\{i\mid \col{R}{i}=0\wedge \lowest(\col{R}{j})\neq i\forall j=1,\ldots,n\}.
\end{eqnarray*}
Although the reduction matrix $R$ is not unique, 
the sets $P_R$ and $E_R$ are the same for any choice of reduction;
therefore, we can define
$P_{M}$ and $E_{M}$ to be equal to $P_R$ and $E_R$ for any reduction $R$
of $M$. We call the set $P$ the \emph{persistence pairs} of $M$.
When obvious from the context, we omit the subscripts
and simply write $P$ for the persistence pairs.
For the boundary matrix $\bm$ of $K$, 
the pairs $(i,j)\in P$ are the persistence pairs of the filtration $(K_i)_{0\leq i\leq n}$,
and the indices in $E$ correspond to the essential simplices of the complex.
Note that $E$ is uniquely determined by~$P$ and~$n$ as the indices between $1$ and $n$ that do not appear in any pair of~$P$. 

The simplest way of reducing $\bm$ is to process columns from left 
to right; for every column, other columns are added from the left
until the pivot index is unique
(Algorithm~\ref{alg:lr_persistence}).
A lookup table can be used
to identify the next column to be added in constant time. 
A flag is used
for every column denoting
whether a persistence pair with the column index has already been found.
After termination, the unpaired columns correspond to the essential columns.
The running time is at most cubic in $n$,
and this bound is actually tight for certain input filtrations,
as demonstrated in~\cite{morozov-persistence}. 

\begin{algorithm}[ht]
\caption{Left-to-right persistence computation}
\label{alg:lr_persistence}
\begin{algorithmic}[1]
\Procedure {Persistence\_left\_right}{$\bm$}
  \State $R\gets \bm$; $L\gets[0,\ldots,0]$; $P\gets\emptyset$ \Comment{$L\in\Z^n$}
  \For{$j=1,\ldots,n$}
  \While{$\col{R}{j}\neq 0$ and $L[\lowest(\col{R}{j})]\neq 0$}
  \State $\col{R}{j}\gets \col{R}{j}+\col{R}{L[\lowest(\col{R}{j})]}$
  \EndWhile
  \If{$\col{R}{j}\neq 0$} 
    \State $i\gets \lowest(\col{R}{j})$
    \State $L[i]\gets j$
    \State Mark columns $i$ and $j$ as paired and add $(i,j)$ to $P$
  \EndIf
  \EndFor
  \State {\bf return} $P$
\EndProcedure
\end{algorithmic}
\end{algorithm}

Let $M$ be derived from $\bm$. A column $\col{M}{j}$ of $M$ is called
\emph{reduced} if either it is zero, 
or if $(i,j)\in P$ with $i=\lowest(\col{M}{j})$.
With this definition, a matrix~$M$ is a reduction of $\bm$
if and only if every column is reduced.

\vspace{-.01\textheight}
\section{Speed-ups}
\label{sec:twists}
\vspace{-.01\textheight}

Algorithm~\ref{alg:lr_persistence} describes the simplest way
of reducing the boundary matrix, but it performs more operations
than actually necessary to compute the persistence pairs.
We now present two simple techniques which both lead to
a significant decrease in the number of required operations.

\paragraph{Clearing positive columns}
The key insight behind our first optimization is the following fact: if $i$ appears as the pivot in a reduced column of $M$, the index~$i$ is positive and hence there exists
a sequence of left-to-right operations on $\col{M}{i}$ that turn it to zero. 
Instead of explicitly executing this sequence of operations, 
we define the \emph{clear} operation 
by setting column $\col{M}{i}$ to zero directly.
Informally speaking, a clear is a shortcut to avoid some column operations
in the reduction when the result is evident.

In order to apply this optimization, we change the traversal order
in the reduction by first reducing the columns corresponding to simplices with dimension $d$
(from left to right), then all columns with dimension $d-1$, and so on.
After having reduced all columns with dimension $\delta$, we have
found all positive inessential columns with dimension $\delta-1$
and clear them before continuing with $\delta-1$.
This way all positive inessential columns of the complex are cleared
without performing any column additions on them. See~\cite{ck-persistent}
for a more detailed description. 

\paragraph{Compression}
Alternatively, we can try to save arithmetic operations by reducing the number of non-zero rows
among the unpaired columns. A useful observation in this context is given next.
\begin{lemma}\label{lem:clearing}
Let $\col{M}{j}$
be a non-zero column of $M$ with $i=\lowest(\col{M}{j})$.
Then $\col{M}{i}$ is a positive and inessential column.
\end{lemma}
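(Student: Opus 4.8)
The plan is to translate the statement into a fact about the pivot indices of a fully reduced matrix. For any reduction $R$ of $\bm$ the set $P=P_\bm$ is the same, and by the definition of persistence pairs the indices occurring as the first coordinate of some pair in $P$ are exactly the positive inessential simplices; these are precisely the indices $i$ for which $i=\lowest(\col{R}{j})$ for some non-zero column $\col{R}{j}$ of $R$. Since $M$ is derived from $\bm$, every matrix obtained from $M$ by further left-to-right operations is again a reduction of $\bm$, with the same set $P$. Hence it suffices to show that $i=\lowest(\col{M}{j})$ occurs as the pivot of a non-zero column of some reduction of $\bm$.

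To do this I would continue the reduction of $M$ with the left-to-right procedure of Algorithm~\ref{alg:lr_persistence}, processing columns in increasing index order. Because $\bm$, and therefore $M$, is strictly upper triangular, we have $i<j$, and bringing columns $1,\dots,j-1$ into their reduced forms uses only additions among columns of index $<j$, so it does not alter $\col{M}{j}$. When the procedure reaches column $j$, the column still equals $\col{M}{j}$ and has pivot $i$. If one of the already reduced columns $\col{R}{k}$ with $k<j$ satisfies $\lowest(\col{R}{k})=i$, then $(i,k)\in P$ and we are done (this column is left unchanged by all remaining steps, so it is also a column of a full reduction of $\bm$). Otherwise the procedure performs no operation on column $j$, so in the resulting reduction $R$ we have $\col{R}{j}=\col{M}{j}\neq 0$ with $\lowest(\col{R}{j})=i$, i.e.\ $(i,j)\in P$. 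In either case $i$ is a first coordinate of a pair in $P$, hence $\sigma_i$ causes a birth (so it is positive) and the class born at $i$ later dies (so it is inessential).

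The argument is essentially bookkeeping; the one point that deserves a word of justification is that column $j$ can be kept fixed throughout — both while columns $1,\dots,j-1$ are reduced and while columns $j+1,\dots,n$ are reduced afterwards — which holds because every left-to-right operation adds a column of strictly smaller index than the one being modified. As a more conceptual alternative I would record the homological reading: $\col{M}{j}$ equals $\col{\bm}{j}$ plus lower columns of $\bm$, so $z:=\col{M}{j}=\delta\gamma$ for a chain $\gamma$ whose highest simplex is $\sigma_j$; then $z$ is a cycle supported on $K_i$ that contains $\sigma_i$, and from $0=\delta z=\delta\sigma_i+\delta(z-\sigma_i)$ one sees that $\delta\sigma_i$ already bounds in $K_{i-1}$ (so $\sigma_i$ is positive), while $z$ bounds in $K_j$ but not in $K_i$ (its top simplex $\sigma_i$ has no coface in $K_i$), so the class born at $i$ dies by index $j$ (so $\sigma_i$ is inessential). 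I expect the reduction-based proof to be the shorter one to write out in full.
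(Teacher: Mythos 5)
Your proof is correct and takes essentially the same route as the paper's: both arguments reduce the claim to showing that $i$ occurs as the pivot index of a reduced column in some reduction of $\bm$, hence is the first coordinate of a persistence pair, which makes $\sigma_i$ positive and inessential. The paper's proof is just a terser version of your case analysis (it asserts without detail that if $\col{M}{j}$ is not reduced, some reduced column ends up with pivot $i$), so your write-up merely fills in the justification the paper leaves implicit; the homological aside is a fine alternative but not needed.
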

\begin{proof}
The statement is clearly true if $\col{M}{j}$ is reduced, because in this case $(i,j)$ is a persistence pair.
If $\col{M}{j}$ is not reduced, this means that after applying some sequence of left-to-right column operations, some reduced column has $i$ as pivot index.
\end{proof}
\begin{corollary}
Let
$\col{M}{i}$ be a negative column of $M$.
Then $i$ is not the pivot index of any column in~$M$.
\end{corollary}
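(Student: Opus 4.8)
The plan is to obtain the corollary directly from Lemma~\ref{lem:clearing} by contraposition, so essentially no new work is needed. First I would argue by contradiction: assume that $i$ is the pivot index of some column of $M$, that is, there is an index $j$ with $\col{M}{j}\neq 0$ and $\lowest(\col{M}{j})=i$. Applying Lemma~\ref{lem:clearing} to this column immediately yields that $\col{M}{i}$ is a positive (and inessential) column.

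Next I would invoke the dichotomy established in Section~\ref{sec:background}: the transition from $K_{i-1}$ to $K_i$ causes \emph{either} a birth \emph{or} a death of a homology class, so the simplex $\sigma_i$ --- and hence the column $\col{M}{i}$ --- is positive or negative, but never both. Since we assumed $\col{M}{i}$ to be negative, this contradicts its being positive, and therefore no column of $M$ can have $i$ as its pivot index. The only point I would state carefully is that a zero column has no pivot index, so ``$i$ is not the pivot index of any column'' is precisely the negation of ``there exists a non-zero column $\col{M}{j}$ with $\lowest(\col{M}{j})=i$''; with that reading, the argument is airtight.

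I do not anticipate any genuine obstacle here: the entire content is already carried by Lemma~\ref{lem:clearing}, and the corollary is merely its contrapositive combined with the mutual exclusivity of the labels \emph{positive} and \emph{negative}. Thus the proof is a one-liner, and the remark worth making in the paper is rather the consequence for the compression step --- a negative column's index never serves as someone else's pivot, so when reducing it we may safely ignore (``compress away'') rows indexed by negative simplices.
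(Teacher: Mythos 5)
Your argument is correct and is exactly the intended one: the paper states this corollary without proof precisely because it is the contrapositive of Lemma~\ref{lem:clearing} combined with the mutual exclusivity of \emph{positive} and \emph{negative} columns. Your added care about zero columns having no pivot index is a fine (if unnecessary) clarification; nothing further is needed.
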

As a consequence, whenever a negative column with index~$j$
has been reduced, row~$j$ can be set to zero before further reducing. 
\begin{corollary}\label{lem:dual_twist}
Let $\col{M}{i}$ be a negative column and let $\col{M}{j}$ be a column with $\entry{M}{i}{j}=1$. Then setting $\entry{M}{i}{j}$ to zero does not affect the pairs.
\end{corollary}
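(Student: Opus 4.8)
The plan is to deduce the claim from the fact already noted above that, once a negative column has been reduced, its row may be set to zero without affecting the pairs. Since flipping a single entry is not itself a left-to-right operation, the idea is to exhibit one matrix $N$ that can be reached \emph{both} from $M$ and from the matrix $M'$ obtained from $M$ by clearing $\entry{M}{i}{j}$, using only moves that are known not to change the pairs; then $P_M=P_N=P_{M'}$. Throughout I will use that a left-to-right operation never changes the pairs: any reduction of a matrix derived from $\bm$ is itself a reduction of $\bm$, since "derived from" is transitive.

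First I would apply left-to-right operations to $M$ so as to fully reduce the columns $1,\dots,i$, obtaining a matrix $\hat M$; thus $P_{\hat M}=P_M$. As $\col{M}{i}$ is a negative column, $\col{\hat M}{i}$ is nonzero with pivot index $k<i$ (and, being negative, $i$ is the pivot of no column). The key observation is that this reduction only adds columns of index $<i$ to columns of index $\le i$; hence it alters only the first $i$ columns and leaves all later columns untouched — in particular column $j$, noting that $\entry{M}{i}{j}=1$ together with the upper-triangularity of $M$ forces $i<j$.

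Next I would run the very same sequence of operations on $M'$. Because $M$ and $M'$ agree on columns $1,\dots,i$ (they differ only in column $j$), the sequence is well defined there and has the same effect, so it transforms $M'$ into the matrix $\hat M'$ that is $\hat M$ with the entry in row $i$, column $j$ set to $0$; again $P_{M'}=P_{\hat M'}$. Finally, since $\col{\hat M}{i}=\col{\hat M'}{i}$ is a reduced negative column, zeroing its row $i$ preserves the pairs in either matrix. Applied to $\hat M$ it yields some matrix $N$ with $P_N=P_{\hat M}=P_M$; applied to $\hat M'$ it yields the \emph{same} matrix $N$, because $\hat M$ and $\hat M'$ differ only in the entry $(i,j)$, which lies in row $i$ and is cleared anyway. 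Hence $P_{M'}=P_{\hat M'}=P_N=P_M$.

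Most of this is routine bookkeeping; the one point that needs care is the assertion that reducing the columns $1,\dots,i$ acts identically on $M$ and $M'$ and does not disturb column $j$ — this is precisely where $\entry{M}{i}{j}=1$ (hence $i<j$) and the left-to-right discipline are used. A conceptually different route would go through the characterization of the pairs by the ranks of the lower-left submatrices of $\bm$, which are invariant under left-to-right operations, and check that flipping the entry $(i,j)$ leaves all ranks entering the pairing formula unchanged; but that seems to demand a somewhat delicate submatrix-rank computation (flipping an entry can in principle change a rank by one, and one must rule this out for every relevant submatrix), so I would favour the detour above.
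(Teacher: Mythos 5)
Your proof is correct and follows essentially the route the paper intends: the paper states this corollary without a separate proof, as an immediate consequence of the preceding observation that once a negative column has been reduced its entire row can be zeroed, and your argument is a careful elaboration of exactly that reduction (first synchronizing $M$ and $M'$ by reducing columns $1,\dots,i$, then zeroing row $i$ in both). The only point worth noting is that your bookkeeping — checking that the reducing operations touch only columns $1,\dots,i$ and hence act identically on $M$ and $M'$ — is precisely the detail the paper leaves implicit, and you supply it correctly.
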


We can even do more: let $i$ be the pivot index of the reduced column $\col{M}{j}$ and assume that the submatrix of $M$ with column indices $\{1,\dots ,j\}$ and row indices $\{i,\dots ,n\}$ is reduced, i.e., the pivot indices are unique in this submatrix.
By adding column~$j$ to each unreduced column in the matrix
that has a non-zero entry at row~$i$, we can eliminate all non-zero entries
in row $i$ from the unreduced columns. Note that if $k<j$ and $\entry{M}{i}{k}\neq0$, then  $\lowest(\col{M}{k})\geq i$ and thus, by assumption, $\col{M}{k}$ must be a reduced negative column.
Therefore, for each unreduced column $\col{M}{k}$, 
the operation $\col{M}{k}\gets \col{M}{k}+\col{M}{j}$ is a left-to-right addition and thus does not affect the pairs.

\section{Reduction in chunks}
\label{sec:chunks}

The two optimization techniques from Section~\ref{sec:twists} both yield
significant speed-ups, but they are not easily combinable, because
clearing requires to process a simplex before its faces, whereas
compression works in the opposite direction. 
In this section, we present an algorithm which combines
both optimization techniques. 

Let $m\in\mathbb N$. Fix $m+1$ numbers
$0=t_0<t_1<\ldots<t_{m-1}<t_m=n$ and define the $i$th \emph{chunk}
of~$\bm$ 
to be the columns of $\bm$ with indices 
$\{t_{i-1}+1,\ldots,t_{i}\}$. We call a column~$\col{\bm}{j}$ \emph{local}
if it forms a persistence pair with another column in the same chunk or in one of the adjacent chunks. 
In this case, we also call the persistence pair local.
Non-local columns (and pairs) are called \emph{global}. 
If $\ell$ is a lower bound on the size of each chuck, then
every global persistence pair has index persistence at least~$\ell$.
We also call an index $j$ local if the $j$th column of $\bm$
is local, and the same for global.
We denote the number of global columns in $\bm$ by $\noglobal$.
The high-level description of our new algorithm consists of three steps:
\begin{enumerate}
\item Partially reduce every chunk independently, applying the clearing optimization, so that all local columns are completely reduced. 
\item Independently compress every global column such that 
      all its non-zero entries are global.
\item Reduce the submatrix consisting only of the global rows and columns.
\end{enumerate}
We  give details about the three steps in the rest of this section.
The first two steps can be 
performed in parallel, whereas the third step only needs to reduce
a matrix of size $\noglobal\times\noglobal$ instead of $n\times n$. In many situations,
$\noglobal$ is significantly smaller than $n$.

\paragraph{Local chunk reduction}
The first step of our algorithm computes the local pairs
by performing two phases of the spectral sequence algorithm~\cite[\textsection VII.4]{eh-computational}. 
Concretely, we apply left-to-right operations as usual,
but in the first phase we only add columns from the same chunk,
and in the second phase we only add columns from both the same chunk and its left neighbor.
After phase $r$, for each $b\in\{r,\dots,m\}$ the submatrix with column indices $\{1,\dots ,t_b\}$ and row indices $\{t_{b-2}+1,\dots ,n\}$ is reduced.
If the reduction of column~$j$ stops at a pivot index $i>t_{b-r}$, row $j$ cannot be reduced any further by adding any column, so
we identify $(i,j)$ as a local persistence pair. 
Conversely, any local pair $(i,j)$ is detected by this method after two phases.
We incorporate the clearing operation for efficiency, that is, we proceed
in decreasing dimension and set detected local positive columns to zero;
see Algorithm~\ref{alg:local_chunk}.
After its execution, 
$L[i]$ contains the index of the 
local negative column with pivot index~$i$ for any local positive column~$i$,
and the resulting matrix~$R$ is a derivation
of $\bm$ in which all local columns are reduced.

\begin{algorithm}[ht]
\caption{Local chunk reduction}
\label{alg:local_chunk}
\begin{algorithmic}[1]
\Procedure {Local\_reduction}{$M$, $t_0,\ldots,t_m$}
  \State $R\gets M$; $L\gets[0,\ldots,0]$; $P\gets\emptyset$ \Comment{$L\in\Z^n$}
  \For{$\delta=d,\ldots,0$}
  \For{$r=1,2$}
  \Comment{Perform two phases of the spectral sequence algorithm}
   \For{$b=r,\ldots,m$}
   \Comment{Loop is parallelizable}
    \For{$j=t_{b-1}+1,\ldots,t_{b}$ with $\dim\sigma_j = \delta$}
      \If{$j$ is not marked as paired}
        \While{$\col{R}{j}\neq 0\wedge L[\lowest(\col{R}{j})]\neq 0 \wedge \lowest(\col{R}{j})>t_{b-r}$}
          \State $\col{R}{j}\gets \col{R}{j}+\col{R}{L[\lowest(j)]}$
        \EndWhile
        \If{$\col{R}{j}\neq 0$}
          \State $i\gets\lowest(\col{R}{j})$
          \If{$i> t_{b-r}$}
            \State $L[i]\gets j$
            \State $\col{R}{j}\gets 0$\Comment{Clear column $i$}
            \State \label{alg:mark_paired}Mark $i$ and $j$ as paired and add $(i,j)$ to $P$
          \EndIf
        \EndIf
      \EndIf
    \EndFor
   \EndFor
  \EndFor
  \EndFor
  \State \textbf{return} $(R,L,P)$
\EndProcedure
\end{algorithmic}
\end{algorithm}

\begin{algorithm}[ht]
\caption{Determining active entries}
\label{alg:active_columns}
\begin{algorithmic}[1]

\Procedure {Mark\_active\_entries}{$R$}
  \For{each unpaired column $k$}
  \Comment{Loop is parallelizable}
  \State\Call{Mark\_column}{$R$, $k$}
  \EndFor
\EndProcedure

\Function {Mark\_column}{$R$, $k$}
  \If {$k$ is marked as active/inactive} \Return true/false \EndIf
  \For{each non-zero entry index $i$ of $\col{R}{k}$}
    \If{$\ell$ is unpaired}
      \State mark $k$ as active and \Return true 
    \ElsIf{$i$ is positive}
    \State $j \gets L[i]$ \Comment{$(i,j)$ is persistence pair}
    \If{ $j\neq k$ and \Call{Mark\_column}{$R$, $j$}}
    \State mark $k$ as active and \Return true 
  \EndIf
    \EndIf
  \EndFor
  \State mark $k$ as inactive and \Return false
\EndFunction

\end{algorithmic}
\end{algorithm}

\paragraph{Global column compression}
Let $R$ be the matrix returned by Algorithm~\ref{alg:local_chunk}.
Before computing the global persistence pairs,
we first compress the global columns, using the ideas from
Section~\ref{sec:twists}; recall that negative rows
can simply be set to zero, while entries in positive rows
can be eliminated by an appropriate column addition.
Note, however, that a full column addition might actually be unnecessary:
for instance, if all non-zero row indices in the added column belong to negative columns
(except for the pivot), the entry in the local positive row 
could just have been zeroed out in the same way 
as in Corollary~\ref{lem:dual_twist}.
Speaking more generally, it is more efficient to avoid column additions
that have no consequences for global indices, neither directly nor indirectly.

In the spirit of this observation, we call an index $i$
\emph{inactive} if either it is a local negative index 
or if $(i,j)$ is a local pair and all indices of non-zero entries in column $\col{R}{j}$
apart from $i$ are inactive. Otherwise, the index is called \emph{active}.
By induction and Corollary~\ref{lem:dual_twist}, we can show:
\begin{lemma}\label{lem:active}
Let $i$ be an inactive index 
and let $\col{M}{j}$ be any column 
with $\entry{M}{i}{j}=1$.  
Then setting $\entry{M}{i}{j}$ to zero does not affect the persistence pairs.
\end{lemma}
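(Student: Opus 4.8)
The plan is to prove Lemma~\ref{lem:active} by induction on the structure of the inactivity definition, using Corollary~\ref{lem:dual_twist} as the base case. The key observation is that the definition of inactive is itself recursive: an index~$i$ is inactive either because it is a local negative index (the "base case"), or because $(i,j)$ is a local pair and every non-zero row index of $\col{R}{j}$ other than~$i$ is inactive (the "inductive case"). So I would set up an induction on the depth of this recursion, or equivalently, well-order the inactive indices so that when we reach~$i$ via its local pair~$j$, all the other non-zero indices of $\col{R}{j}$ have been handled already. Since $R$ is upper-triangular and $(i,j)$ a persistence pair with $i = \lowest(\col{R}{j})$, every other non-zero index of $\col{R}{j}$ is strictly less than~$i$, so induction on the index~$i$ itself is well-founded.

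First I would handle the base case: if $i$ is a local negative index, then $\col{M}{i}$ is a negative column, and Corollary~\ref{lem:dual_twist} immediately gives that zeroing out $\entry{M}{i}{j}$ for any column~$j$ with $\entry{M}{i}{j}=1$ does not affect the pairs. Next, for the inductive step, suppose $i$ is inactive because $(i,j_0)$ is a local pair and all non-zero indices of $\col{R}{j_0}$ other than~$i$ are inactive. Let $\col{M}{j}$ be a column with $\entry{M}{i}{j}=1$. The idea is to realize the zeroing of $\entry{M}{i}{j}$ as the left-to-right column addition $\col{M}{j} \gets \col{M}{j} + \col{R}{j_0}$ — but this addition also toggles the other non-zero entries of $\col{R}{j_0}$ in column~$j$, namely at the inactive indices $i' \ne i$. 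By the inductive hypothesis applied to each such~$i'$ (all of which satisfy $i' < i$), toggling $\entry{M}{i'}{j}$ does not affect the pairs either. So the net effect of the column addition — which, being left-to-right, preserves the pairs — is the same (up to further pair-preserving modifications) as just zeroing $\entry{M}{i}{j}$. I would phrase this carefully: performing the single addition $\col{M}{j} \gets \col{M}{j} + \col{R}{j_0}$ is pair-preserving, and it equals the composition of "zero out $\entry{M}{i}{j}$" with a sequence of "toggle $\entry{M}{i'}{j}$ at inactive $i'$" operations, each of which is pair-preserving by induction; hence the zeroing alone is pair-preserving.

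The main obstacle I anticipate is making the bookkeeping of "does not affect the pairs" compose correctly. Corollary~\ref{lem:dual_twist} and the lemma are phrased as statements about a single entry modification of an abstract derived matrix~$M$, but in the inductive step we apply a whole column addition from $R$ (not $M$), and we need to know that $\col{R}{j_0}$ is still available as a column of the working matrix — or more precisely, that the linear combination it represents is pair-preserving to add. Here I would lean on the fact established in Section~\ref{sec:twists} (the discussion following Corollary~\ref{lem:dual_twist}): since $i = \lowest(\col{R}{j_0})$ and the relevant submatrix is reduced, adding $\col{R}{j_0}$ to any column with a non-zero entry at row~$i$ is a left-to-right operation and hence pair-preserving. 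The delicate point is the order of operations: to invoke the inductive hypothesis on the indices $i' < i$ appearing in $\col{R}{j_0}$, I must make sure their "inactive" status and the pair-preservation of zeroing them is not disturbed by what we have already done to column~$j$; this is fine because each step only modifies entries within column~$j$, and the inductive statement quantifies over \emph{any} column with a $1$ in the given row, so it applies regardless of the current state of column~$j$. I would make this explicit to avoid a circularity worry. Modulo this careful sequencing, the argument is a routine structural induction.
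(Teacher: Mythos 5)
Your proposal is correct and follows essentially the same route as the paper, which proves the lemma only by the one-line remark ``by induction and Corollary~\ref{lem:dual_twist}'': base case from that corollary for local negative indices, and an inductive step that realizes the zeroing as the left-to-right addition of the reduced local column $\col{R}{j_0}$ followed by pair-preserving toggles at the (inactive, strictly smaller) remaining indices. Your explicit attention to the well-foundedness of the induction and to the fact that the toggle direction is immaterial (since equality of pair sets is symmetric) fills in details the paper leaves implicit, and does so correctly.
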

The compression proceeds in two steps: first, every non-zero entry of a global column 
is classified as active or inactive (using depth-first search; see Algorithm~\ref{alg:active_columns}). Then, we iterate over the
global columns, set all entries with inactive index to zero,
and eliminate any non-zero entry with a local positive index $\ell$ by column addition with $L[\ell]$ (see Algorithm~\ref{alg:compress}).
After this process, we obtain a matrix $R'$ 
with the same persistence pairs as $R$, such that the global columns of $R'$ 
have non-zero entries only in the global rows.

\begin{algorithm}[ht]
\caption{Global column compression}
\label{alg:compress}

\begin{algorithmic}[1]
\Procedure {Compress}{$R,k$}
  \State \textbf{Uses variables: } $L$
  \For{each non-zero entry index $\ell$ of $\col{R}{k}$ in decreasing order}
      \If{$\ell$ is paired}
      \If{$\ell$ is inactive}
         \State $\entry{R}{k}{\ell}\gets 0$
      \Else
         \State $j\gets L[\ell]$ \Comment{$(\ell,j)$ is persistence pair}
         \State $\col{R}{k}\gets \col{R}{k}+\col{R}{j}$
      \EndIf
      \EndIf
  \EndFor
\EndProcedure
\end{algorithmic}
\end{algorithm}

\paragraph{Submatrix reduction}
After having compressed all global columns, 
these form a $\noglobal\times\noglobal$ matrix ``nested'' in
$R$ (recall that $\noglobal$ is the number of global columns). To complete
the computation of the persistence pairs, we simply
perform standard reduction on the remaining matrix.
For efficiency, we perform steps 2 and 3 alternatingly for all dimensions
in decreasing order and apply the clearing optimization; 
this way, we avoid the compression of positive global columns.
Algorithm~\ref{alg:persistence_chunk} summarizes the whole method.

\begin{algorithm}[ht]
\caption{Persistence in chunks}
\label{alg:persistence_chunk}
\begin{algorithmic}[1]
\Procedure {Persistence\_in\_chunks}{$\bm$, $t_0,\ldots,t_m$}
  \State $(R,L,P)\gets$\Call{Local\_reduction}{$\bm$, $t_0,\ldots,t_m$}
  \Comment{step 1: reduce local columns}

  \State \Call{Mark\_active\_entries}{$R$}
  \For{$\delta=d,\ldots,0$}
  \State \Comment{step 2: compress global columns}
  \For{$j=1,\ldots,n$  with $\dim\sigma_j = \delta$}
  \Comment{Loop is parallelizable}

  \If{column $j$ is not paired}

    \State \Call{Compress}{$R,j$}
  \EndIf
  \EndFor
  \For{$j=1,\ldots,n$ with $\dim\sigma_j = \delta$}
    \Comment{step 3: reduce global columns}
      \While{$\col{R}{j}\neq 0\wedge L[\lowest(\col{R}{j})]\neq 0$}
        \State $\col{R}{j}\gets \col{R}{j}+\col{R}{L[\lowest(j)]}$
      \EndWhile
  \If{$\col{R}{j}\neq 0$}
  \State $i\gets\lowest(\col{R}{j})$
  \State $L[i]\gets j$
  \State $\col{R}{i}\gets 0$\Comment{Clear column $i$}
  \State Mark $i$ and $j$ as paired and add $(i,j)$ to $P$
  \EndIf
  \EndFor
  \EndFor
  \State {\bf return} $P$
\EndProcedure

\end{algorithmic}
\end{algorithm}

\section{Analysis}
Algorithm~\ref{alg:persistence_chunk} permits a complexity analysis
depending on the following parameters: 
$n$, the number of simplices;
$m$, the number of chunks; 
$\ell$, the maximal size of a chunk;
and $\noglobal$, the number of global columns.
We assume that for any simplex, the number of non-zero faces of codimension $1$ is bounded by a constant (this is equivalent to assuming that the dimension of the complex is a constant).

\paragraph{General complexity}
We show that the complexity of Algorithm~\ref{alg:persistence_chunk} is
bounded by
\begin{equation}
O(m\ell^3 + \noglobal\ell n + \noglobal^3).
\label{eqn:complexity}
\end{equation}
The three summands correspond to the running times of the three
steps\footnote{The running time of the
third step could be lowered to $\noglobal^\omega$, where $\omega$ is the matrix-%
multiplication exponent, using the method of~\cite{mms-zigzag}.}.
Note that $\noglobal\in O(n)$ in the worst case.

For the complexity of Algorithm~\ref{alg:local_chunk}, we consider
the complexity of reducing one chunk, which consists of up to $\ell$ columns.
Within the local chunk reduction, every column is only added with
columns of the same or the previous chunk, so there are only up to $2\ell$ column additions
per column. Moreover, since the number of non-zero entries per column
in $\bm$ is assumed to be constant, there are only $O(\ell)$ many
entries that can possibly become non-zero during the local chunk reduction.
It follows that the local chunk reduction can be considered as a reduction
on a matrix with $\ell$ columns and $O(\ell)$ rows. If we represent 
columns by linked lists (containing the non-zero indices in sorted order), one column operation can be done in $O(\ell)$
primitive operations, which leads to a total complexity of $O(\ell^3)$
per chunk.

The computation of active columns in Algorithm~\ref{alg:active_columns} is done by depth-first search on a graph whose vertices are given by the columns and whose edges correspond to their non-zero entries. The number of edges is $O(n\ell)$, so we obtain a running time of $O(n\ell)$.

Next, we consider the cost of compressing a global column
with index $j$. After the previous step, the column has at most $O(\ell)$
non-zero entries. We transform the presentation of the column
from a linked list into a bit vector of size $n$.
In this representation, adding another column in list representation with $v$ entries to column~$j$ takes
time proportional to $v$.
In the worst case, we need to add all columns
with indices $1,\ldots,j-1$ to $j$. Each such column has $O(\ell)$ entries.
At the end, we transform the bit vector back into a linked list representation.
The total cost is $O(n+(j-1)\ell+n)=O(n\ell)$ per global
column.

Finally, the complexity of the global reduction is $O(\noglobal^3)$, as in the standard reduction.

\paragraph{Choosing chunks}
We discuss different choices of chunk size and their complexities.
A generic choice for an arbitrary complex is to choose $O(\sqrt{n})$ chunks
of size $O(\sqrt{n})$ each. With that, the complexity of \eqref{eqn:complexity}
becomes
$$O(n^2+\noglobal_1n\sqrt{n} + \noglobal_1^3).$$
Alternatively, choosing $O(\frac{n}{\log n})$ 
chunks of size $O(\log n)$, the complexity becomes
$$O(n\log^2 n + \noglobal_2n\log n+\noglobal_2^3).$$
We replaced $\noglobal$ by $\noglobal_1$ and $\noglobal_2$ to express that the number of global 
columns is different in both variants. In general, choosing larger chunks
is likely to produce less global columns, since
every global persistence pair has index persistence at least~$\ell$ (the size of the smallest chunk) .

\paragraph{Cubical complexes}
We consider an important special case of boundary matrices: consider
a $d$-di\-men\-sion\-al image with $p$ hypercubes, where each vertex
contains a grayscale value. We assume that the cubes
are triangulated conformally in order to get simplicial input~-- the argument also
works, however, for the case of cubical cells. 
We assign function values inductively, 
assigning to each simplex the maximal value of its faces.
Assuming that
all vertex values are distinct, the \emph{lower star}
of vertex $v$ is the set of all simplices which have the same function value as $v$. 
Filtering the simplices in a way that respects the order of the function values, 
we get a \emph{lower star filtration} of the image. 
Now choose the lower stars as the chunks in our reduction algorithm. Note that
the lower star is a subset of the star of the corresponding vertex,
which is of constant size (assuming that the dimension $d$ is constant).
Therefore, the complexity bound~\eqref{eqn:complexity} reduces to
$$O(n+\noglobal n+\noglobal^3)=O(\noglobal n+\noglobal^3).$$
Note that global columns with large index persistence
might still have very small, or even zero, persistence with respect to 
the function values, for instance in the presence of a flat region in the image
where many vertices have similar values.

\section{Experiments}
\label{sec:experiments}
We implemented two versions of the algorithm presented in Section~\ref{sec:chunks}: a sequential and a parallel version
(using \textsc{OpenMP}), in which the first two steps of the algorithm are performed simultaneously on each chunk and on each global column, respectively. In both cases, we use $\lfloor\sqrt{n}\rfloor$ as the chunk size. For a fair comparison, we also re-implemented the algorithms introduced in \cite{ck-persistent,elz-topological} in the same framework, that means, using the same data representations and low-level operations such as column additions. 
Our implementation is publicly available in our new \emph{PHAT} library
for computing persistence homology, available at \url{http://phat.googlecode.com/}.
Additionally, we compare to the memory efficient algorithm~\cite{Guenther_PLD2} based on discrete Morse theory~\cite{Forman1998b} and to the implementation of the persistent cohomology algorithm~\cite{dmv-dualities} found in the \textsc{Dionysus} library \cite{morozov2010dionysus}. 

To find out how these algorithms behave in practice, we apply them to five representative data sets. The first three are 3D image data sets with a resolution of $128^3$. The first of these is given by a Fourier sum with random coefficients and is representative of smooth data. The second is uniform noise. The third is the sum of the first two and represents large-scale structures with some small-scale noise. These data sets are illustrated in Figure~\ref{fig:data_sets} by an isosurface. 

\begin{figure}[htb]%
\centering%
a)\includegraphics[width=0.29\linewidth]{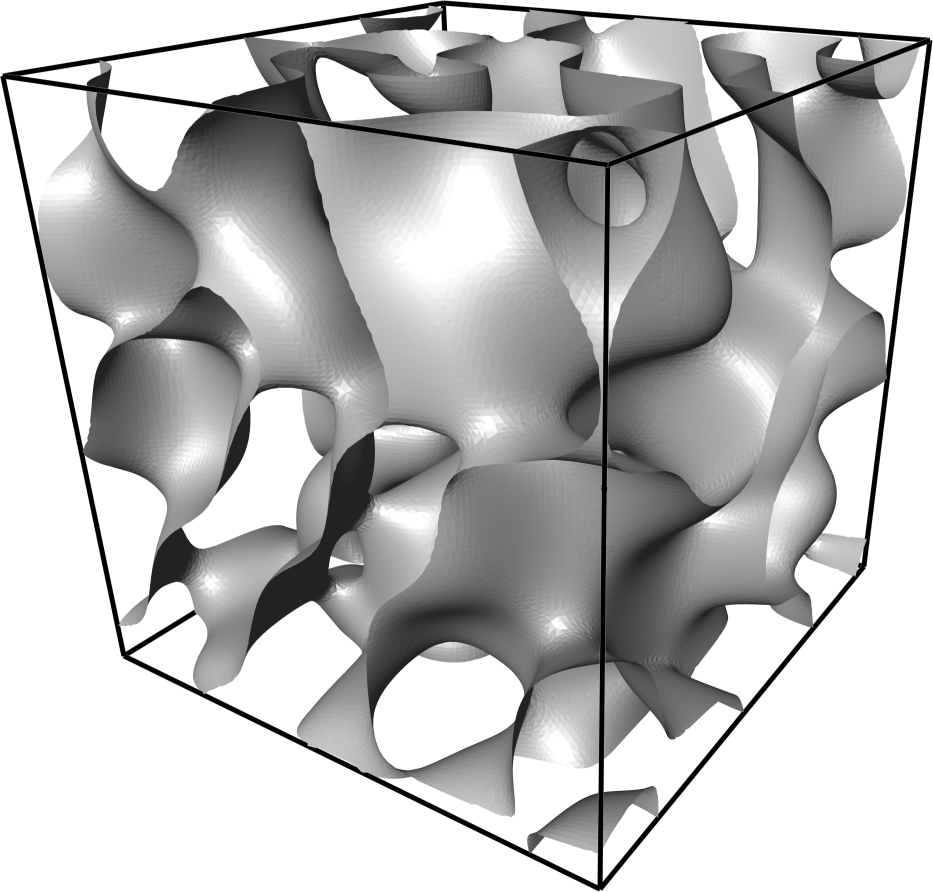}\quad
b)\includegraphics[width=0.29\linewidth]{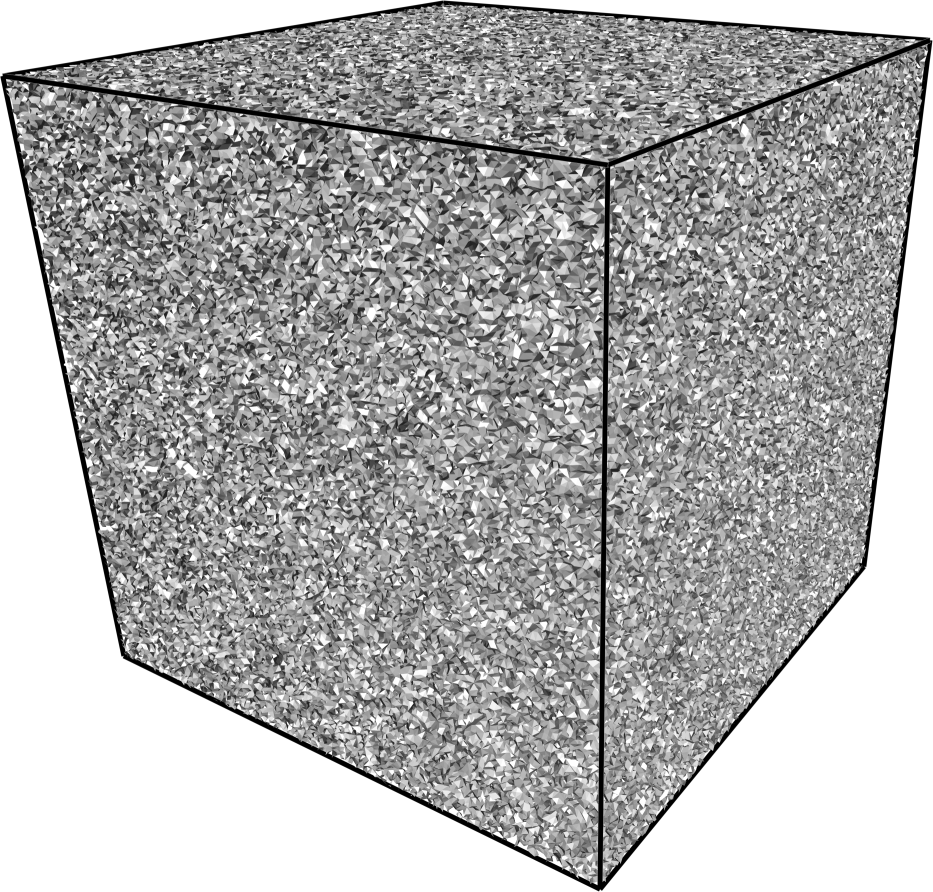}\quad
c)\includegraphics[width=0.29\linewidth]{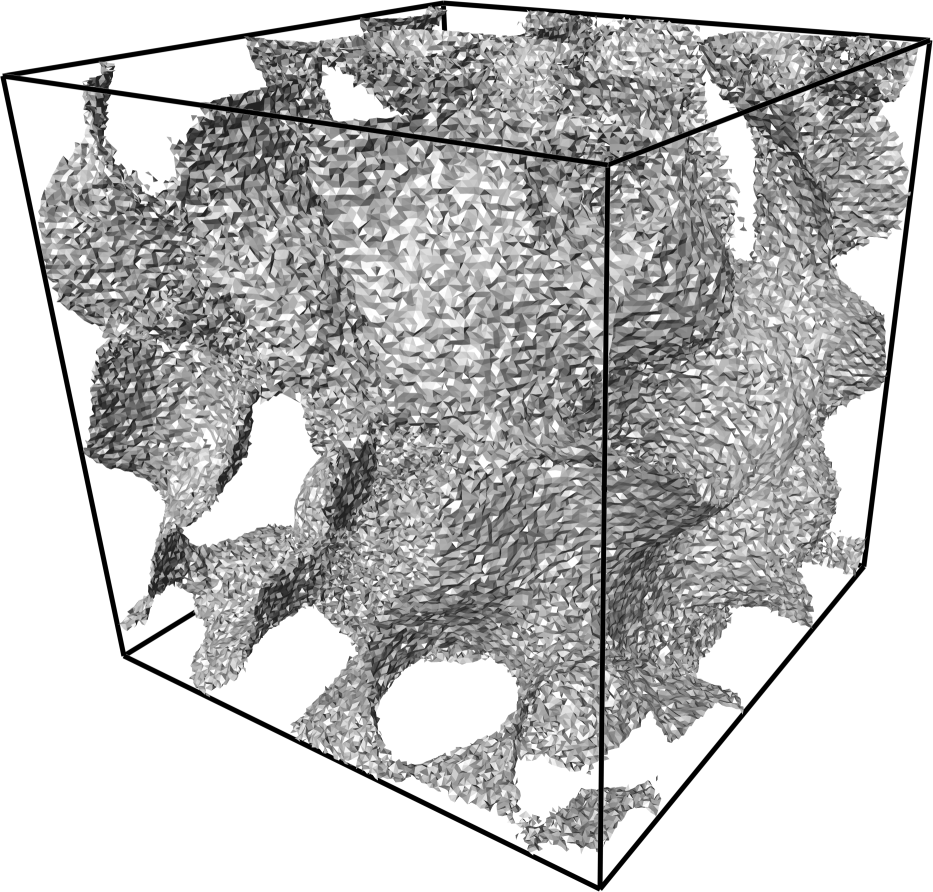}
\caption{A single isosurface of the representative data sets used to benchmark and compare our algorithm: a) a smooth data set, b) uniformly distributed noise, c) the sum of a) and b).}%
\label{fig:data_sets}%
\end{figure}

In addition to the lower star filtrations of these image data sets, we also consider an alpha shape filtration defined by 10000 samples of a torus embedded in
$\mathbb R^3$, and the 4-skeleton of the Rips filtration given by 50 points randomly chosen from the Mumford data set \cite{Mumford2003Images}.

As pointed out in \cite{dmv-dualities}, the pairs of persistent cohomology are the same as those of persistent homology. We therefore also applied all algorithms to the corresponding cochain filtration, given in matrix form by transposing the boundary matrix and reversing the order of the columns; this operation is denoted by $(\cdot)^{\perp}$. When reducing such a coboundary matrix with the clearing optimization, columns are processed in order of increasing instead of decreasing dimension.

Table~\ref{tab:running_times} contains the running times of the above algorithms applied to filtrations of these five data sets run on a PC with two Intel Xeon E5645 CPUs. We can observe a huge speed-up caused 
by the clearing optimization as already reported in~\cite{ck-persistent}.
We can see that our chunk algorithm performs slightly worse than the one of~\cite{ck-persistent} when executed
sequentially, but faster when parallelized. We also observe that standard,
twist, and chunk generally behave worse 
when computing persistent cohomology,
except for the Rips filtration of the Mumford data set, where the converse is true.

\begin{table}[htb]%

\begin{center}
\begin{tabular*}{0.99\textwidth}{@{\extracolsep{\fill}} l|r|r|r|r|r||r|r|r}
	\multicolumn{1}{l|}{Dataset}& \multicolumn{1}{c|}{$n \cdot 10^{-6}$} & \multicolumn{1}{c|}{std.~\cite{elz-topological}} &	\multicolumn{1}{c|}{twist~\cite{ck-persistent}} & \multicolumn{1}{c|}{cohom.~\cite{dmv-dualities}}  & \multicolumn{1}{c||}{DMT~\cite{Guenther_PLD2}}&	\multicolumn{1}{c|}{ $g / n$} &	\multicolumn{1}{c|}{ chunk (1x)} &	\multicolumn{1}{c}{ chunk (12x) }					\\
	\hline%
	Smooth							&	16.6	&	383s			&	3.1s			  &		65.8s		&		2.0s	&	0\%				&	5.0s 				&	0.9s\\
	Smooth$^{\perp}$		&	16.6	& 432s			&	11.3s			  &		20.8s		&			--	&	0\%				&	6.3s 				&	0.9s\\
	\hline%
	Noise								&	16.6	&	336s			&	17.2s				&		15971s	&		13.0s	&	9\%			&	28.3s 			&	6.3s\\
	Noise$^{\perp}$			&	16.6	&	1200s			&	29.0s				&		190.1s	&			--	&	9\%				&	31.1s 			&	5.8s\\
	\hline%
	Mixed	  						&	16.6	&	330s 			&	5.8s				&		50927s	&		12.3s	&	5\%			&	21.6s	 			&	2.4s\\
	Mixed$^{\perp}$			&	16.6	&	446s  		&	13.0s				&		32.7s 	&			--	&	5\%			&	32.0s	 			&	2.9s\\
	\hline%
	Torus								&	0.6		&	52s  			&	0.3s				&		1.6s 		&			--	&	7\%			&	0.3s	 			&	0.1s\\
	Torus$^{\perp}$			&	0.6		&	24s  			&	0.3s				&		1.4s 		&			--	&	7\%			&	0.9s 				&	0.2s\\
	\hline%
	Mumford							&	2.4		&	38s  			&	35.2s				&		2.8s 		&			--	&	82\%			&	14.6s	 			&	1.8s\\
	Mumford$^{\perp}$		&	2.4		&	58s  			&	0.2s				&		184.1s	&			--	&	82\%			&	1.5s 				&	0.4s
	\end{tabular*}
\end{center}
\caption{Running time comparison of various persistent homology algorithms applied to the data sets described in Section~\ref{sec:experiments}. The last three columns contain information of the algorithm presented in this paper: the fraction of global columns $g/n$, and the running times using one and twelve cores, respectively.}
\label{tab:running_times}
\end{table}%

\section{Conclusion and Outlook}

We have presented an algorithm for persistent homology that includes two simple optimization techniques
into the reduction algorithm. It can be fully parallelized, except for the reduction 
of compressed global columns, whose number is often small. 
Besides our asymptotic complexity bounds, which give a detailed dependence on the parameters of the algorithm,
our experiments show that significant speed-ups can be achieved through parallelized persistence computation.
Similar observations have been made recently by Lewis and Zomorodian~\cite{lz-multicore}
for the computation of (non-persistent) homology; see also~\cite{lsv-spectral}.
We plan a more extensive discussion of the practical effects of our optimizations and parallelization in 
an extended version of this paper.

\paragraph{Acknowledgements}
The authors thank Chao Chen, Herbert Edelsbrunner, and Hubert Wagner for helpful discussions.

\end{document}